\newcounter{dummy} \numberwithin{dummy}{section}
\newtheorem{defi}[dummy]{Definition}
\newtheorem{algorithm}{Algorithm}
\DeclareMathAlphabet{\mathpzc}{OT1}{pzc}{m}{it}
\newcommand{\bt}{{\bf t}}
\newcommand{\bu}{{\bf u}}
\newcommand{\bw}{{\bf w}}
\newcommand{\bv}{{\bf v}}
\def\T{{\mathcal T}}
\def\E{{\mathcal E}}
\def\F{{\mathcal F}}
\def\pT{{\partial T}}
\def\l{{\langle}}
\def\r{{\rangle}}
\def\T{{\mathcal T}}
\def\bbf{{\bf f}}
\def\bg{{\bf g}}
\def\bn{{\bf n}}
\def\bbQ{\mathbb{Q}}
\def\3bar{{|\hspace{-.02in}|\hspace{-.02in}|}}
\title{ A Discrete Divergence Free Weak Galerkin Finite Element method
 for the Stokes Equations}
\author{Lin Mu\thanks{Computer Science and Mathematics Division
Oak Ridge National Laboratory, Oak Ridge, TN, 37831,USA
(mul1@ornl.gov). This research was supported in part by the U.S.~Department of Energy, Office of Science, Office of Advanced Scientific Computing Research, Applied Mathematics program under award number ERKJE45; and by the Laboratory Directed Research and Development program at the Oak Ridge National Laboratory, which is operated by UT-Battelle, LLC., for the U.S.~Department of Energy under Contract DE-AC05-00OR22725.
}
\and
Junping Wang\thanks{Division of Mathematical Sciences, National
Science Foundation, Arlington, VA 22230 (jwang@\break nsf.gov). The
research of Wang was supported by the NSF IR/D program, while
working at the Foundation. However, any opinion, finding, and
conclusions or recommendations expressed in this material are those
of the author and do not necessarily reflect the views of the
National Science Foundation.}
\and Xiu Ye\thanks{Department of
Mathematics, University of Arkansas at Little Rock, Little Rock, AR
72204 (xxye@ualr.edu). This research was supported in part by
National Science Foundation Grant DMS-1115097}}
\begin{document}
\maketitle

\begin{abstract}
A discrete divergence free weak Galerkin finite element method is developed for the Stokes equations based on a weak Galerkin (WG)  method introduced in \cite{wy-stokes}. Discrete divergence free bases are constructed explicitly  for the lowest order weak Galerkin elements in  two and three dimensional spaces. These basis functions can be derived on general meshes of arbitrary shape of polygons and polyhedrons.
With the divergence free basis derived, the discrete divergence free WG scheme can eliminate pressure variable from the system and reduces a saddle point problem to a symmetric and positive definite system with many fewer unknowns. Numerical results are presented to demonstrate the robustness and accuracy of this discrete divergence free WG method.
\end{abstract}

\begin{keywords}
Weak Galerkin, finite element methods, the Stokes equations,
divergence free.
\end{keywords}

\begin{AMS}
Primary, 65N15, 65N30, 76D07; Secondary, 35B45, 35J50
\end{AMS}
\pagestyle{myheadings}

\section{Introduction}
The Stokes problem
seeks unknown functions $\bu$ and $p$ satisfying
\begin{eqnarray}
-\nabla\cdot A\nabla\bu+\nabla p&=& \bbf\quad
\mbox{in}\;\Omega,\label{moment}\\
\nabla\cdot\bu &=&0\quad \mbox{in}\;\Omega,\label{cont}\\
\bu&=&0\quad \mbox{on}\; \partial\Omega,\label{bc}
\end{eqnarray}
where $\Omega$ is a polygonal domain in
$\mathbb{R}^d$ with $d=2,3$ and $A$ is a symmetric and positive definite $d\times d$ matrix-valued
function in $\Omega$.
For the nonhomogeneous boundary condition
\[
\bu=\bg\quad \mbox{on}\; \partial\Omega,
\]
one can use the standard procedure by letting $\bu=\bu_0+\bu_g$. $\bu_g$ is a known function satisfying $\bu_g=\bg$ on $\partial\Omega$ and $\bu_0$ is zero at $\partial\Omega$ and satisfies (\ref{moment})-(\ref{cont}) with different right hand sides.

The weak form in the primary velocity-pressure formulation for the
Stokes problem (\ref{moment})--(\ref{bc}) seeks $\bu\in
[H_0^1(\Omega)]^d$ and $p\in L_0^2(\Omega)$ satisfying
\begin{eqnarray}
(A\nabla\bu,\nabla\bv)-(\nabla\cdot\bv, p)&=&({\bf f}, \bv),\quad \forall\bv\in [H_0^1(\Omega)]^d\label{w1}\\
(\nabla\cdot\bu, q)&=&0,\quad\quad\forall  q\in L_0^2(\Omega).\label{w2}
\end{eqnarray}

In the standard finite element methods for the Stokes and the
Navier-Stokes equations, both pressure and velocity are approximated
simultaneously. The primitive system is a large saddle point
problem. Numerical solvers for such indefinite systems are usually
less effective and robust than solvers for definite systems. On the
other hand, the divergence-free finite element method, discrete or
exact, computes numerical solution of  velocity by solving a
symmetric positive definite system in a divergence-free subspace. It
eliminates the pressure from the coupled equations and hence
significantly reduces the size of the system. The divergence-free
method is particularly attractive in the cases where the velocity is
the primary variable of interest, for example, the groundwater flow
calculation. The main tasks in the implementation of the
divergence-free method are to understand divergence-free subspaces,
weakly or exactly, and to construct bases for them.

Many finite element methods, continuous \cite{cr,gr,gun} and
discontinuous \cite{ckss,grw,kj,ls, wy-hdiv}, have been developed
and analyzed for the Stokes and the Navier-Stokes equations.
Divergence-free basis for different finite element methods have been
constructed \cite{gri1, gri2, gri3,gh1,gh2,wwy,yh1,yh2}.

A weak Galerkin finite element method  was introduced in
\cite{wy-stokes} for the Stokes equations in the primal
velocity-pressure formulation. This method is designed by using
discontinuous piecewise polynomials on finite element partitions
with arbitrary shape of polygons/polyhedra. Weak Galerkin methods
were  first introduced  in \cite{wy, wy-m} for second order elliptic
equations. In general, weak Galerkin finite element formulations for
partial differential equations can be derived naturally by replacing
usual derivatives by weakly-defined derivatives in the corresponding
variational forms, with an option of adding a stabilization term to
enforce a weak continuity of the approximating functions.  Therefore
the weak Galerkin method developed in \cite{wy-stokes} for the
Stokes equations  naturally has the form: find $\bu_h\in V_h$ and
$p_h\in W_h$  satisfying
\begin{eqnarray}
(A\nabla_w\bu_h,\nabla_w\bv)+s(\bu_h,\bv)-(\nabla_w\cdot\bv,p_h)&=&({\bf f},\bv),\label{w3}\\
(\nabla_w\cdot\bu_h,q)&=&0\label{w4}
\end{eqnarray}
for all the test functions $\bv\in V_h$ and $q\in W_h$  where $V_h$ and $W_h$ will be defined later. The stabilizer  $s(\bu_h,\bv)$ in (\ref{w3}) is parameter independent.

Let $D_h$ be a discrete divergence free subspace of $V_h$ such that $(\nabla_w\cdot\bv,q)=0$ for ant $q\in W_h$.
Then the discrete divergence free WG formulation  is to  find $\bu_h\in D_h$   satisfying
\begin{eqnarray}
(A\nabla_w\bu_h,\nabla_w\bv)+s(\bu_h,\bv)&=&({\bf f},\bv),\quad\forall\bv\in D_h.\label{dfw}
\end{eqnarray}
System (\ref{dfw}) is symmetric and positive definite with many fewer unknowns. The main purpose of this paper is to construct  bases for $D_h$ in two and three dimensional spaces. A unique feature of these divergence free basis functions is that they can be obtained on general meshes such as hybrid meshes or meshes with hanging nodes. Numerical examples in two dimensional space are provided to confirm the theory. Although the Stokes equations is considered, the divergence free basis can be use for solving the Navier-Stokes equations.

\section{A Weak Galerkin Finite Element Method}\label{Section:wg-fem}

In this section, we will review the WG method for the Stokes
equations introduced in \cite{wy-stokes} with $k=1$.

Let ${\cal T}_h$ be a partition of the domain $\Omega$ consisting
mix of polygons satisfying a set of conditions specified in
\cite{wy-m}. In addition, we assume that all the elements $T\in\T_h$
are convex. Denote by ${\cal F}_h$ the set of all edges in 2D or
faces in 3D in ${\cal T}_h$, and let ${\cal F}_h^0={\cal
F}_h\backslash\partial\Omega$ be the set of all interior edges or
faces.

We define a weak Galerkin finite element space for
the velocity as follows
\[
V_h=\left\{ \bv=\{\bv_0, \bv_b\}:\ \{\bv_0, \bv_b\}|_{T}\in
[P_{1}(T)]^d\times [P_{0}(e)]^d,\ e\subset\pT,\ \bv_b=0\ \mbox{on}\
\partial\Omega \right\}.
\]
We would like to emphasize that there is only a single value $\bv_b$
defined on each edge in 2D and face in 3D. For the pressure variable, we have
the following finite element space
\[
W_h=\left\{q:\ q\in L_0^2(\Omega), \ q|_T\in P_{0}(T)\right\}.
\]

For a given $\bv\in V_h$, a  weak gradient and a  weak divergence  are defined locally on each $T\in \T_h$ as follows.

\smallskip

\begin{defi}
A  weak gradient, denoted by
$\nabla_{w}$, is defined as the unique polynomial
$(\nabla_{w}\bv) \in [P_{0}(T)]^{d\times d}$ for $\bv\in V_h$ satisfying the
following equation,
\begin{equation}\label{d-g}
(\nabla_{w}\bv, q)_T = -(\bv_0,\nabla\cdot q)_T+ \langle \bv_b,
q\cdot\bn\rangle_{\partial T},\qquad \forall q\in [P_{0}(T)]^{d\times d},
\end{equation}
\end{defi}

\begin{defi}
A  weak divergence, denoted by
($\nabla_{w}\cdot$), is defined as the unique polynomial
$(\nabla_{w}\cdot\bv) \in P_{0}(T)$ for $\bv\in V_h$ that satisfies the following
equation
\begin{equation}\label{d-d}
(\nabla_{w}\cdot\bv, \varphi)_T = -(\bv_0, \nabla\varphi)_T + \langle
\bv_b\cdot\bn, \varphi\rangle_{\partial T},\qquad
\forall \varphi\in P_{0}(T).
\end{equation}
\end{defi}

Denote by $Q_{0}$ the $L^2$ projection operator from $[L^2(T)]^d$
onto $[P_1(T)]^d$ and denote by
$Q_{b}$ the $L^2$ projection from $[L^2(e)]^d$ onto $[P_{0}(e)]^d$.
Let $(\bu;p)$ be the solution of (\ref{moment})-(\ref{bc}).
Define $Q_h\bu=\{Q_0\bu, Q_b\bu\}\in V_h$.
let $\bbQ_h$ be the  local $L^2$
projections onto $P_{0}(T)$.

\smallskip

We  introduce three bilinear forms as follows
\begin{eqnarray*}
s(\bv,\;\bw) &= &\sum_{T\in {\cal T}_h}h_T^{-1}\l Q_b\bv_0-\bv_b,\;\;Q_b\bw_0-\bw_b\r_\pT,\\
a(\bv,\ \bw)&=&\sum_{T\in\T_h}(A\nabla_w\bv,\ \nabla_w\bw)_T+s(\bv,\bw),\\
b(\bv,\ q)&=&\sum_{T\in\T_h}(\nabla_w\cdot\bv,\ q)_T.
\end{eqnarray*}

\begin{algorithm}
A numerical approximation for (\ref{moment})-(\ref{bc})
can be obtained by seeking $\bu_h=\{\bu_0,\bu_b\}\in V_h$ and
$p_h\in W_h$ such that
\begin{eqnarray}
a(\bu_h,\ \bv)-b(\bv,\;p_h)&=&(f,\;\bv_0),\quad\forall\bv\in V_h\label{wg1}\\
b(\bu_h,\;q)&=&0,\quad\forall q\in W_h.\label{wg2}
\end{eqnarray}
\end{algorithm}

Define
\begin{equation}\label{3barnorm}
\3bar\bv\3bar^2=a(\bv,\bv).
\end{equation}

The following optimal error estimates have been derived in \cite{wy-stokes}.
\begin{theorem}\label{h1-bd}
Let $(\bu; p)\in  [H_0^1(\Omega)\cap H^{2}(\Omega)]^d\times
(L_0^2(\Omega)\cap H^{1}(\Omega))$  and $(\bu_h;p_h)\in
V_h\times W_h$ be the solution of (\ref{moment})-(\ref{bc}) and
(\ref{wg1})-(\ref{wg2}), respectively. Then, the following error
estimates hold true
\begin{eqnarray}
\3bar Q_h\bu-\bu_h\3bar+\|\bbQ_hp-p_h\|&\le& Ch(\|\bu\|_{2}+\|p\|_{1}),\label{err1}\\
\|Q_0\bu-\bu_0\|&\le& Ch^{2}(\|\bu\|_{2}+\|p\|_{1}).\label{l2-error}
\end{eqnarray}
\end{theorem}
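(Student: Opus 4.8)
The plan is to carry out the classical mixed/saddle-point error analysis adapted to the weak operators: establish error equations for $\be_h=Q_h\bu-\bu_h$ and $\epsilon_h=\bbQ_hp-p_h$, bound the arising consistency functional by $Ch(\|\bu\|_2+\|p\|_1)$, and then combine the coercivity of $a$ with a discrete inf-sup condition for $b$ to get (\ref{err1}); the $L^2$ bound (\ref{l2-error}) then follows from a duality argument. The engine of the whole argument is a pair of commutation identities. Choosing the test matrix $q$ in (\ref{d-g}) to be constant kills the term $(\bv_0,\nabla\cdot q)_T$, so $(\nabla_w(Q_h\bu),q)_T=\langle Q_b\bu,q\cdot\bn\rangle_\pT=\langle\bu,q\cdot\bn\rangle_\pT=(\nabla\bu,q)_T$; hence $\nabla_w(Q_h\bu)$ is precisely the $L^2$ projection of $\nabla\bu$ onto $[P_0(T)]^{d\times d}$. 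The same computation in (\ref{d-d}) gives $\nabla_w\cdot(Q_h\bu)=\bbQ_h(\nabla\cdot\bu)=0$, so $Q_h\bu$ is discretely divergence free and $b(\be_h,q)=0$ for all $q\in W_h$. This settles the second error equation outright.

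For the momentum equation I would test (\ref{moment}) against $\bv_0$ elementwise and integrate by parts, producing face terms $\langle A\nabla\bu\cdot\bn,\bv_0\rangle_\pT$ and $\langle p,\bv_0\cdot\bn\rangle_\pT$. Because $A\nabla\bu\cdot\bn$ and $p\bn$ are continuous across interior faces while $\bv_b$ is single-valued and vanishes on $\partial\Omega$, the $\bv_b$-contributions telescope to zero and I may replace $\bv_0$ by $\bv_0-\bv_b$. Rewriting the remaining volume terms through the definitions of $\nabla_w$ and $\nabla_w\cdot$ and the commutation identities above, then subtracting (\ref{wg1}), I expect an error equation $a(\be_h,\bv)-b(\bv,\epsilon_h)=\phi_{\bu,p}(\bv)$ in which the consistency functional $\phi_{\bu,p}$ gathers face terms measuring the projection errors of $A\nabla\bu$ and of $p$, together with the stabilization term $s(Q_h\bu,\bv)$. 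Each piece is controlled by a trace inequality followed by the approximation properties of the $L^2$ projections and the bound $\sum_{T}h_T^{-1}\|Q_b\bv_0-\bv_b\|_\pT^2\le\3bar\bv\3bar^2$, giving $|\phi_{\bu,p}(\bv)|\le Ch(\|\bu\|_2+\|p\|_1)\,\3bar\bv\3bar$.

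The velocity energy estimate is then immediate: taking $\bv=\be_h$ and using $b(\be_h,\epsilon_h)=0$ yields $\3bar\be_h\3bar^2=a(\be_h,\be_h)=\phi_{\bu,p}(\be_h)\le Ch(\|\bu\|_2+\|p\|_1)\3bar\be_h\3bar$, hence $\3bar\be_h\3bar\le Ch(\|\bu\|_2+\|p\|_1)$. For the pressure I would invoke a discrete inf-sup condition $\|\epsilon_h\|\le C\sup_{\bv\in V_h}b(\bv,\epsilon_h)/\3bar\bv\3bar$; since $b(\bv,\epsilon_h)=a(\be_h,\bv)-\phi_{\bu,p}(\bv)$, the already-established bounds give the pressure half of (\ref{err1}). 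For (\ref{l2-error}) I would run an Aubin--Nitsche duality argument: let $(\bpsi,\xi)$ solve the dual Stokes problem with load $Q_0\bu-\bu_0$, invoke the $H^2\times H^1$ regularity $\|\bpsi\|_2+\|\xi\|_1\le C\|Q_0\bu-\bu_0\|$, and expand $\|Q_0\bu-\bu_0\|^2$ using the two error equations and the commutation identities, collecting one power of $h$ from the primal consistency bound and a second from the approximation error of the dual solution.

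I expect the main obstacle to lie in the stability infrastructure rather than in the consistency algebra. Verifying the discrete inf-sup condition for $b$ on $V_h\times W_h$ over general convex polygonal and polyhedral partitions, and ensuring that the trace and inverse inequalities hold uniformly in the element shape, are the delicate steps; these are exactly the places where the generality of the mesh is tested. The duality step for (\ref{l2-error}) is also sensitive, since one must confirm that the dual consistency terms are genuinely $O(h)$ rather than $O(1)$, so that the two arguments compound to the full $h^2$ rate.
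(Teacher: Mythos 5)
The paper does not prove this theorem at all---it is quoted verbatim from \cite{wy-stokes}, where the argument proceeds exactly as you outline: the commutation identities $\nabla_w(Q_h\bu)=\bbQ_h(\nabla\bu)$ and $\nabla_w\cdot(Q_h\bu)=\bbQ_h(\nabla\cdot\bu)=0$, error equations with a consistency functional bounded by $Ch(\|\bu\|_2+\|p\|_1)$, coercivity plus a discrete inf-sup condition for the energy and pressure estimates, and an Aubin--Nitsche duality argument (under full regularity of the dual Stokes problem) for the $L^2$ bound. Your proposal is correct and is essentially the same proof as in that reference, including your accurate identification of the delicate infrastructure (inf-sup and uniform trace/inverse inequalities on general polytopal meshes, established in \cite{wy-stokes} and \cite{wy-m}).
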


Define a discrete  divergence free subspace $D_h$ of $V_h$ by
\begin{equation}\label{D_h}
D_h=\{\bv\in V_h;\;b(\bv,q) =0,\quad\forall q\in W_h\}.
\end{equation}

By taking the test functions from $D_h$,
the weak Galerkin formulation (\ref{wg1})-(\ref{wg2}) is equivalent to the following  divergence-free weak Galerkin finite element scheme.

\smallskip

\begin{algorithm}\label{algorithm2}
A discrete divergence free WG approximation for (\ref{moment})-(\ref{bc}) is to find
$\bu_h=\{\bu_0,\bu_b\}\in D_h$ such that
\begin{eqnarray}
a(\bu_h,\ \bv)&=&(f,\;\bv_0),\quad \forall \bv=\{\bv_0,\bv_b\}\in D_h.\label{df-wg}
\end{eqnarray}
\end{algorithm}

System (\ref{df-wg}) is symmetric and positive definite with many
fewer unknowns. It can be solved effectively by many existing
solvers.

The main task of this paper is to construct basis for $D_h$. In the
next two sections, discrete divergence free bases will be
constructed explicitly for two and three dimensional spaces.

\section{Construction of Discrete Divergence Free Basis for Two Dimensional Space}

For a given partition $\T_h$,  let  ${\cal V}_h^0$ be the set of all
interior vertices. Let $N_F=card (\F_h^0)$, $N_V=card ({\cal
V}_h^0)$ and $N_K=card (\T_h)$. It is known based on the Euler
formula that for a partition consisting of convex polygons, then
\begin{equation}\label{key}
N_F+1=N_V+N_K.
\end{equation}
For a mesh $\T_h$ with  hanging nodes, the relation in (\ref{key}) is still true if we treat the hanging nodes as vertices.

First we need to derive a basis  for $V_h$. For each $T\in\T_h$ and
any $\bv=\{\bv_0,\bv_b\}\in V_h$, $\bv_0$ is a vector function with
two components and each component is a linear function. Therefore
there are six linearly independent linear functions $\Phi_{j+1},
\Phi_{j+2},\cdots,\Phi_{j+6}$ in $V_h$ such that they are nonzero
only at the interior of element $T$. For each $e_i\in\F_h^0$,
$\bv_b$ is a constant vector function. Thus, there are two linearly
independent constant  functions $\Psi_{i,1}$ and $\Psi_{i,2}$ in
$V_h$ which take nonzero value only on $e_i$. Then it is easy to see
that
\begin{equation}\label{basis}
V_h={\rm span}\{\Phi_1,\cdots, \Phi_{6N_K},\Psi_{1,1},\Psi_{1,2},\cdots,\Psi_{N_F,1},\Psi_{N_F,2}\}.
\end{equation}
For a given function $\bv=\{\bv_0,\bv_b\}\in V_h$, it is easy to see that $\bv_0$ can be spanned by the basis functions $\Phi_i$ and $\bv_b$ by the  basis functions $\Psi_{j,k}$.

Next, we will find the dimension of $D_h$. Since the dimension for pressure space $W_h$ is $N_K-1$, it follows from (\ref{key}) that
\begin{equation}\label{bbb}
\rm{dim} (D_h)=\rm{dim} (V_h)-\rm{dim}(W_h)=6N_K+2N_F-N_K+1=6N_K+N_F+N_V.
\end{equation}

\begin{lemma}
The basis functions $\Phi_1,\cdots, \Phi_{6N_K}$ of $V_h$ in (\ref{basis}) are in $D_h$ and linearly independent.
\end{lemma}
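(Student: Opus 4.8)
The lemma has two parts: the $\Phi$ functions lie in $D_h$, and they are linearly independent. The plan is to dispose of each claim by a short direct computation using the definitions of the weak divergence and the bilinear form $b(\cdot,\cdot)$.

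First I would show $\Phi_i\in D_h$ for each $i$. By definition of $D_h$ in (\ref{D_h}), this amounts to verifying $b(\Phi_i,q)=0$ for all $q\in W_h$, i.e. $(\nabla_w\cdot\Phi_i,q)_T=0$ on every element $T$. The key observation is that each $\Phi_i$ is, by construction, supported in the interior of a single element: its boundary part vanishes, so $(\Phi_i)_b=0$ on all of $\partial T$. Feeding this into the definition (\ref{d-d}) of the weak divergence with the test function $\varphi$ taken to be a constant (recall $q|_T\in P_0(T)$), the boundary term $\langle (\Phi_i)_b\cdot\bn,\varphi\rangle_{\partial T}$ drops out, leaving $(\nabla_w\cdot\Phi_i,\varphi)_T=-((\Phi_i)_0,\nabla\varphi)_T$. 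Since $\varphi$ is constant, $\nabla\varphi=0$, so the right-hand side vanishes as well; hence $\nabla_w\cdot\Phi_i=0$ identically and therefore $b(\Phi_i,q)=0$ for every $q\in W_h$. This establishes $\Phi_i\in D_h$.

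Second, I would argue linear independence. The cleanest route is to use that the $\Phi_i$ already form part of the basis of $V_h$ exhibited in (\ref{basis}): they are a subcollection of a set that spans $V_h$ and whose cardinality matches $\dim V_h$, so the whole collection is linearly independent, and in particular any sub-collection is. Concretely, a vanishing linear combination $\sum_i c_i\Phi_i=0$ restricts to each element $T$ as a vanishing combination of the six functions $\Phi_{j+1},\dots,\Phi_{j+6}$ whose interior parts are linearly independent in $[P_1(T)]^d$; matching the components of the interior polynomial forces every $c_i=0$. Since the supports attached to distinct elements are disjoint, there is no cross-element interaction to worry about.

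I do not expect a genuine obstacle here: both parts reduce to unwinding a definition. The only point that warrants care is confirming that the interior shape functions $\Phi_{j+1},\dots,\Phi_{j+6}$ are truly chosen with zero boundary trace $(\Phi_i)_b=0$, which is what makes the weak divergence vanish; this is exactly how they were introduced in the paragraph preceding (\ref{basis}). Given that, the membership in $D_h$ and the independence are both immediate.
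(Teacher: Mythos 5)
Your proposal is correct and follows essentially the same route as the paper: both use the definition (\ref{d-d}) of the weak divergence together with the two facts $\Phi_{i,b}=0$ and $\nabla q=0$ (since $q$ is piecewise constant) to conclude $b(\Phi_i,q)=0$, and both treat linear independence as immediate because the $\Phi_i$ are a subcollection of the basis of $V_h$. The only cosmetic difference is that you note the slightly stronger fact $\nabla_w\cdot\Phi_i=0$ elementwise and spell out the independence argument that the paper dismisses as obvious.
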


\smallskip

\begin{proof}
Let $\Phi_i=\{\Phi_{i,0},\Phi_{i,b}\}$. The definition of $\Phi_i$ implies $\Phi_{i,b}=0$.
 For any $q\in W_h$, it follows from (\ref{d-d}), $\nabla q=0$ and $\Phi_{i,b}=0$,
\begin{eqnarray*}
b(\Phi_i,q)&=&\sum_{T\in\T_h} (\nabla_w\cdot\Phi_i,q)_T\\
&=&\sum_{T\in\T_h}(-(\Phi_{i,0},\nabla q)_T+\l \Phi_{i,b}\cdot\bn, q\r_\pT)\\
&=&0,
\end{eqnarray*}
which proves the lemma since the linear independence of  $\Phi_1,\cdots, \Phi_{6N_K}$ is obvious.
\end{proof}

\smallskip

For any $e_i\in\F_h^0$,let $\psi_{i,1}$ and $\psi_{i,2}$ be two basis functions of $V_h$ associated with $e_i$. Let $\bn_{e_i}$ and $\bt_{e_i}$ be a normal vector and a tangential vector to $e_i$ respectively. Define  $\Upsilon_i=C_1\Psi_{i,1}+C_2\Psi_{i,2}$ such that $\Upsilon_i|_{e_i}=\bt_{e_i}$. Obviously $\Upsilon_i\in V_h$ is only nonzero on $e_i$.

\smallskip

\begin{lemma}
Functions  $\Upsilon_1,\cdots,\Upsilon_{N_F}\in V_h$ are in $D_h$ and linearly independent.
\end{lemma}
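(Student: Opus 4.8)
The plan is to establish the two assertions separately, both of which rest on the elementary geometric fact that the tangential vector $\bt_{e_i}$ is orthogonal to the edge normal $\bn_{e_i}$. For membership in $D_h$, I would fix an arbitrary $q\in W_h$ and expand $b(\Upsilon_i,q)=\sum_{T\in\T_h}(\nabla_w\cdot\Upsilon_i,q)_T$ using the weak-divergence identity (\ref{d-d}). Writing $\Upsilon_i=\{\Upsilon_{i,0},\Upsilon_{i,b}\}$, the defining property of the edge basis functions $\Psi_{i,1},\Psi_{i,2}$ is that their interior component vanishes, so $\Upsilon_{i,0}=0$. Consequently the volume term $-(\Upsilon_{i,0},\nabla q)_T$ drops out (it would vanish in any case since $q$ is piecewise constant and $\nabla q=0$), leaving only the boundary contribution $\l\Upsilon_{i,b}\cdot\bn,\,q\r_{\partial T}$. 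This is the exact analogue of the computation in the previous lemma, except that there $\Phi_{i,b}=0$ kills the boundary term outright, whereas here $\Upsilon_{i,b}\ne 0$ and one must work slightly harder.

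The crucial step is to observe that $\Upsilon_{i,b}$ is supported on the single edge $e_i$ and equals $\bt_{e_i}$ there, so the sum over $T$ collapses to the two elements $T_1,T_2$ sharing $e_i$, on which the outward normals to $e_i$ are $\pm\bn_{e_i}$. The orthogonality identity $\bt_{e_i}\cdot\bn_{e_i}=0$ then gives $\Upsilon_{i,b}\cdot\bn=\bt_{e_i}\cdot(\pm\bn_{e_i})=0$ on $e_i$, so \emph{each} surviving boundary term vanishes identically. Note that this is stronger than the usual normal-flux cancellation between $T_1$ and $T_2$: no cancellation is needed because the integrand is already zero, which is why the conclusion is insensitive to the possibly different constant values of $q$ on the two neighbouring elements. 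Hence $b(\Upsilon_i,q)=0$ for every $q\in W_h$, placing $\Upsilon_i$ in $D_h$.

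For linear independence I would take a vanishing combination $\sum_{i=1}^{N_F}\alpha_i\Upsilon_i=0$ and restrict its boundary component to a fixed edge $e_j$. Since each $\Upsilon_i$ is nonzero only on $e_i$, the restriction to $e_j$ leaves only $\alpha_j\bt_{e_j}=0$, and as $\bt_{e_j}\ne 0$ this forces $\alpha_j=0$; letting $j$ range over all interior edges yields independence. I do not anticipate a genuine obstacle here, since the whole argument reduces to a one-line orthogonality computation together with a disjoint-support argument. The only points requiring care are verifying that $\Upsilon_{i,0}$ is indeed zero for these edge basis functions, so that no interior contribution ever appears, and being explicit that each individual boundary term vanishes rather than relying on inter-element cancellation.
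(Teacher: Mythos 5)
Your proposal is correct and follows essentially the same route as the paper: expand $b(\Upsilon_i,q)$ via the weak-divergence identity (\ref{d-d}), kill the volume term using $\nabla q=0$ (your extra observation that $\Upsilon_{i,0}=0$ is also valid), kill the boundary term using $\bt_{e_i}\cdot\bn=0$, and deduce linear independence from the disjoint supports of the $\Upsilon_i$ on distinct edges. Your write-up is merely more explicit than the paper's (collapsing the sum to the two elements sharing $e_i$ and noting that no inter-element cancellation is needed), but the substance is identical.
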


\smallskip

\begin{proof}
Let $\Upsilon_i=\{\Upsilon_{i,0},\Upsilon_{i,b}\}$. For any $q\in W_h$, it follows from (\ref{d-d}) and $\nabla q=0$,
\begin{eqnarray*}
b(\Upsilon_i,q)&=&\sum_{T\in\T_h} (\nabla_w\cdot\Upsilon_i,q)_T\\
&=&\sum_{T\in\T_h}(-(\Upsilon_{i,0},\nabla q)_T+\l \Upsilon_{i,b}\cdot\bn, q\r_\pT)\\
&=&\sum_{T\in\T_h}\l \Upsilon_{i,b}\cdot\bn, q\r_\pT\\
&=&0,
\end{eqnarray*}
where we use the fact $\bt_{e_i}\cdot\bn=0$.
Since $\Upsilon_i$ is only nonzero on $e_i$, $\Upsilon_1,\cdots,\Upsilon_{N_F}$ are linearly independent. We completed the proof.
\end{proof}

\smallskip

For a given interior vertex $P_i\in {\cal V}_h^0$, assume that there are $r$ elements having $P_i$ as a vertex which form a hull ${\cal H}_{P_i}$ as shown in Figure \ref{fig1}. Then there are $r$ interior edges $e_j$ ($j=1,\cdots, r$) associated with ${\cal H}_{P_i}$. Let $\bn_{{e_j}}$ be a normal vector on $e_j$ such that normal vectors $\bn_{e_j}$ $j=1,\cdots,r$ are counterclockwise around vertex $P_i$ as shown in Figure \ref{fig1}.  For each $e_j$, let $\Psi_{j,1}$ and $\Psi_{j,2}$ be the two basis functions of $V_h$ which is only nonzero on $e_j$. Define $\Theta_j=C_1\Psi_{j,1}+C_2\Psi_{j,2}\in V_h$ such that $\Theta_j|_{e_j}=\bn_{e_j}$. Define $\Lambda_i=\sum_{j=1}^r \frac{1}{|e_j|}\Theta_j$.

\begin{figure}[!tb]
\centering
\includegraphics[width=0.4\textwidth]{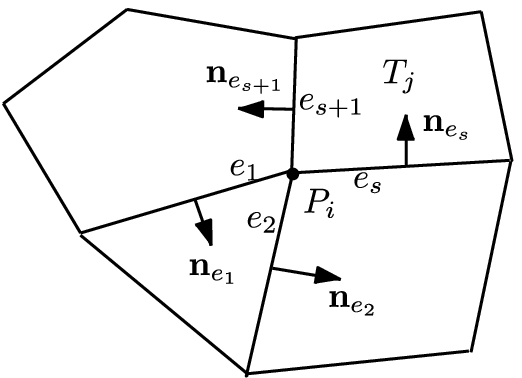}\qquad \qquad
\includegraphics[width=0.4\textwidth]{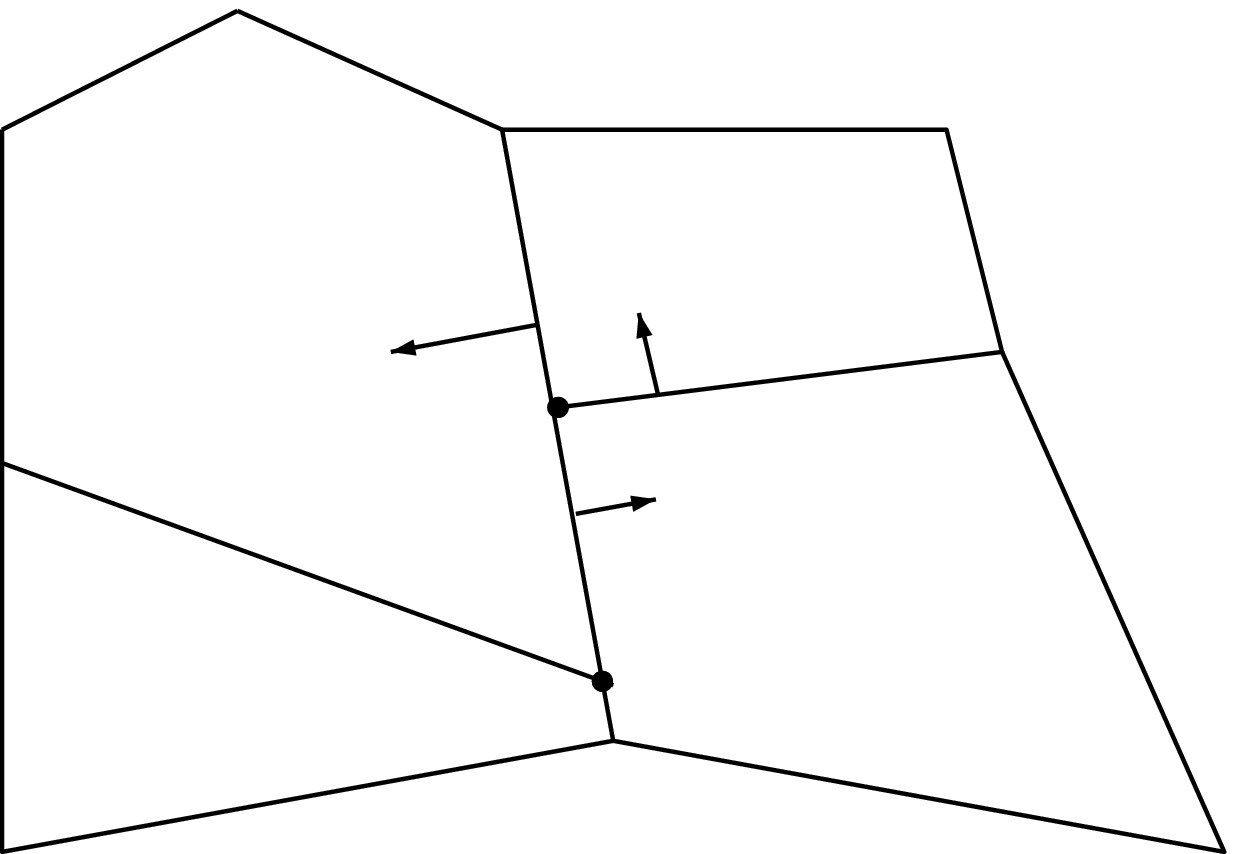}\\
  (a)\quad\quad\quad\quad\quad\quad\quad\quad\quad\quad\quad\quad\quad\quad\quad   (b)\\
\caption{(a) A 2D hull ${\cal H}_{P_i}$; (b) A hull with hanging node.}
\label{fig1}
\end{figure}

\smallskip

\begin{lemma}\label{lemma3}
Functions $\Lambda_1,\cdots,\Lambda_{N_V}\in V_h$ are in $D_h$ and linearly independent.
\end{lemma}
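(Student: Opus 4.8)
The plan is to establish two things for the functions $\Lambda_1,\ldots,\Lambda_{N_V}$: first that each $\Lambda_i$ lies in $D_h$, and second that they are linearly independent. For membership in $D_h$, I would follow the same template as the previous two lemmas. Writing $\Lambda_i=\{\Lambda_{i,0},\Lambda_{i,b}\}$, I would compute $b(\Lambda_i,q)$ for arbitrary $q\in W_h$. Since $q$ is piecewise constant, $\nabla q=0$ on each element, so by the weak divergence definition \eqref{d-d} the volume term vanishes and $b(\Lambda_i,q)=\sum_{T\in\T_h}\langle\Lambda_{i,b}\cdot\bn,q\rangle_{\partial T}$. The key point here is that $\Lambda_i$ is supported only on the edges $e_1,\ldots,e_r$ emanating from the interior vertex $P_i$, and on each such $e_j$ its boundary value equals $\frac{1}{|e_j|}\bn_{e_j}$.

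The heart of the argument is showing this boundary sum vanishes, and this is where the specific construction with the weights $\tfrac{1}{|e_j|}$ and the counterclockwise orientation of the normals matters. The interior edge $e_j$ is shared by two elements of the hull ${\cal H}_{P_i}$; when I collect the contributions of $\langle\Lambda_{i,b}\cdot\bn,q\rangle$ over both elements sharing $e_j$, the outward normals $\bn$ of the two elements point in opposite directions, so each face contributes $|e_j|\,\tfrac{1}{|e_j|}(\bn_{e_j}\cdot\bn_T)\,q|_T$. The $\tfrac{1}{|e_j|}$ weight cancels the edge length $|e_j|$ coming from integrating the constant over $e_j$, leaving $\pm q|_T$ on each side. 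I expect that summing around the vertex produces a telescoping cancellation: for each element $T$ in the hull, the two edges of $T$ meeting at $P_i$ contribute $+q|_T$ and $-q|_T$ with opposite signs dictated by the consistent counterclockwise orientation of the $\bn_{e_j}$, so the total sum over the hull collapses to zero. This geometric bookkeeping — verifying that the orientation convention makes the per-element contributions cancel in pairs — is the main obstacle and the step I would work out most carefully, likely with reference to Figure \ref{fig1}.

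For linear independence, I would exploit the localization of supports. Each $\Lambda_i$ is built only from the edge basis functions $\Theta_j$ on edges incident to the vertex $P_i$. Suppose a linear combination $\sum_i c_i\Lambda_i=0$. Because distinct interior vertices have distinct local stars, I would argue that each $\Lambda_i$ has a ``private'' edge or edge-contribution not shared with the others in a way that forces $c_i=0$; more robustly, I would test the relation against the tangential functions $\Upsilon$ or restrict attention to boundary values on specific edges. The cleanest approach is probably to observe that the normal boundary values $\Lambda_{i,b}\cdot\bn_{e}$ on the edges incident to $P_i$ identify $P_i$ uniquely, so evaluating a vanishing combination on an edge reveals only the coefficients of the (at most two) vertices adjacent to that edge, and a standard connectivity or inductive argument then yields $c_i=0$ for all $i$.

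I would close by noting that together with the previous two lemmas this accounts for $6N_K+N_F+N_V$ independent functions in $D_h$, matching the dimension count \eqref{bbb}, which confirms these three families form a basis of $D_h$; however, for the present lemma it suffices to verify the two stated claims, so I would stop once membership and independence are established.
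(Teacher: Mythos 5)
Your first half (membership in $D_h$) is sound and is essentially the paper's argument: with $\nabla q=0$ the volume term in (\ref{d-d}) drops out, and the boundary sum regroups element by element, where on each element $T$ of the hull the two edges meeting at $P_i$ contribute $+q|_T$ and $-q|_T$ because of the counterclockwise normals and the $1/|e_j|$ weights. The paper writes the same cancellation against the indicator basis $q_j$ of $W_h$, which is only a cosmetic difference.

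The independence half, however, has a genuine gap: you never use the boundary of the domain, and without it the argument cannot close. If $e$ is an edge both of whose endpoints $P_i,P_j$ are interior vertices, then restricting $\sum_k c_k\Lambda_k=0$ to $e$ gives $c_i\bn^{(i)}+c_j\bn^{(j)}=0$, where $\bn^{(i)},\bn^{(j)}$ are the normals to $e$ used in building $\Lambda_i$ and $\Lambda_j$; since the counterclockwise conventions at the two endpoints induce opposite normals on a shared edge, this reads $c_i=c_j$ --- a propagation relation, not a vanishing. Consequently a vertex away from the boundary has no ``private'' edge (every edge incident to it also carries the $\Lambda$ of its other endpoint), and pairing against the tangential functions $\Upsilon$ yields $0=0$ identically, since the $\Lambda$'s have purely normal boundary values; so neither of your suggested mechanisms produces a first zero coefficient. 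The missing idea, which is the crux of the paper's proof, is the boundary anchor: one first takes a vertex $P_l$ whose hull ${\cal H}_{P_l}$ contains an interior edge $e_m$ having a \emph{boundary node} as an endpoint; on such an edge $\Lambda_l$ is the only member of the family that is nonzero (boundary nodes index no $\Lambda$), whence $c_l=0$, and one then peels inward, vertices with known-zero coefficients playing the role of boundary nodes. This anchor is not a technicality: on a mesh without boundary every edge lies in exactly two vertex stars with opposite normals, so $\sum_i\Lambda_i=0$ and the family is genuinely dependent --- precisely the phenomenon the paper meets in 3D, where the relation $\sum_j\Lambda_j=0$ around each interior vertex forces the elimination step preceding Lemma \ref{lemma9}. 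Connectivity alone, without the boundary, can therefore never force $c_i=0$.
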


\smallskip

\begin{proof}
 Suppose that there exist constants  $c_1,\cdots, c_{N_V}$ such that $\sum_{i=1}^{N_V}c_i\Lambda_i=0$. Let $\Lambda_l$ be associated with a hull ${\cal H}_{P_l}$ such  that there exists $e_m$ as one of the interior edges of ${\cal H}_{P_l}$  and edge $e_m$ has  a boundary node as one of its end points. Since $\Lambda_i=0$ on $e_m$ for $i\neq l$,  we have
\[
0=\sum_{i=1}^{N_V}\int_{e_m}c_i\Lambda_i=\int_{e_m}c_l\Lambda_l=\int_{e_m}c_l\Theta_m=c_l\bn_{e_m},
\]
which implies $c_l=0$. By this way, we can prove that all $c_i=0$ and $\Lambda_1,\cdots,\Lambda_{N_V}$ are linearly independent.
Next, we will show that $b(\Lambda_i,q)=0$ for all $q\in W_h$. Let $q_j\in W_h$ such that $q_j=1$ on $T_j\in\T_h$ and $q_j=0$ otherwise. So we only need to show that
\begin{equation}\label{qi}
b(\Lambda_i,q_j)=0,\quad\forall j=1,\cdots, N_K.
\end{equation}
Let $\Lambda_i$ and $q_j$ be associated with hull ${\cal H}_{P_i}$ and element $T_j$ respectively.
If $T_j\in\T_h$ is not in ${\cal H}_{P_i}$, we easily have $b(\Lambda_i,q_j)=0$. If $T_j\in {\cal H}_{P_i}$, let $e_s$ and $e_{s+1}$ be its two edges in ${\cal H}_{P_i}$ shown in Figure \ref{fig1},
\begin{eqnarray*}
b(\Lambda_i,q_j)&=&\sum_{T\in\T_h} (\nabla_w\cdot\Lambda_i,q_j)_T\\
&=& (\nabla_w\cdot\Lambda_i,q_j)_{T_j}\\
&=&-(\Lambda_{i,0},\nabla q)_{T_j}+\l \Lambda_{i,b}\cdot\bn, q\r_{\pT_j}\\
&=&\int_{e_s}\frac{1}{|e_s|}\bn_{e_s}\cdot\bn+\int_{e_{s+1}}\frac{1}{|e_{s+1}|}\bn_{e_{s+1}}\cdot\bn\\
&=&0.
\end{eqnarray*}
We proved $\Lambda_i\in D_h$ for $i=1,\cdots, N_V$.
\end{proof}

\smallskip

\begin{theorem}\label{thm1}
Let $D_h$ be defined in (\ref{D_h}). Then for two dimensional space, $D_h$ is spanned by the following basis functions,
\begin{equation}\label{main}
D_h={\rm Span}\{\Phi_1,\cdots, \Phi_{6N_K}, \Upsilon_1,\cdots, \Upsilon_{N_F},\Lambda_1,\cdots,\Lambda_{N_V}\}.
\end{equation}
\end{theorem}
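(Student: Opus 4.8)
The plan is to combine the three preceding lemmas with the dimension count (\ref{bbb}). Each of the families $\{\Phi_i\}$, $\{\Upsilon_j\}$, $\{\Lambda_k\}$ lies in $D_h$ by the first lemma, the second lemma, and Lemma \ref{lemma3} respectively, and together they number $6N_K+N_F+N_V$, which by (\ref{bbb}) is exactly $\dim(D_h)$. Since the cardinality already matches the dimension, it suffices to prove that the full collection is linearly independent: a linearly independent subset of the right size automatically spans a finite-dimensional space.

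To prove independence, suppose
\begin{equation*}
\sum_{i=1}^{6N_K} a_i\Phi_i+\sum_{j=1}^{N_F} b_j\Upsilon_j+\sum_{k=1}^{N_V} c_k\Lambda_k=0\quad\text{in }V_h.
\end{equation*}
The key structural observation is that the two groups live on disjoint parts of a weak function: the $\Phi_i$ are supported purely in the element interiors (their boundary part vanishes, as noted in the first lemma), whereas the $\Upsilon_j$ and $\Lambda_k$ are built from the edge functions $\Psi_{\cdot}$ and therefore have vanishing interior part. Consequently the interior component $\bv_0$ and the boundary component $\bv_b$ of the relation must vanish separately. The interior equation reads $\sum_i a_i\Phi_{i,0}=0$, and the linear independence of the $\Phi_i$ (first lemma) forces $a_i=0$ for all $i$.

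It remains to treat the boundary equation $\sum_j b_j\Upsilon_{j,b}+\sum_k c_k\Lambda_{k,b}=0$. I would fix an edge $e_j$ and read off the boundary value there: among the $\Upsilon$'s only $\Upsilon_j$ is supported on $e_j$, contributing $b_j\bt_{e_j}$, while each $\Lambda_k$ supported on $e_j$ contributes a multiple of $\bn_{e_j}$. Projecting onto the tangential direction and using $\bt_{e_j}\cdot\bn_{e_j}=0$ annihilates every $\Lambda$ term and leaves $b_j\bt_{e_j}=0$, whence $b_j=0$ for each $j$. The relation then reduces to $\sum_k c_k\Lambda_k=0$, and the linear independence of $\Lambda_1,\dots,\Lambda_{N_V}$ already established in Lemma \ref{lemma3} yields $c_k=0$ for all $k$.

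The step demanding the most care is the boundary bookkeeping in the previous paragraph: one must confirm that on each edge the tangential/normal splitting genuinely decouples the single surviving $\Upsilon$ coefficient from the hull-adjacent $\Lambda$ coefficients, which rests on the orthogonality $\bt_{e_j}\cdot\bn_{e_j}=0$ together with the localized supports of the basis functions. By contrast, the interior/boundary separation that disposes of the $a_i$ and the final elimination of the $c_k$ are immediate: the former because $\Phi$ is purely interior while $\Upsilon$ and $\Lambda$ are purely boundary, and the latter because it is exactly the content of Lemma \ref{lemma3}. Once $a_i$, $b_j$, and $c_k$ are all shown to vanish, linear independence, and hence the spanning property (\ref{main}), follows at once from the dimension count.
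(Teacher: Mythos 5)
Your proposal is correct and follows essentially the same route as the paper: both reduce the spanning claim to linear independence via the dimension count (\ref{bbb}), separate the $\Phi_i$ from the edge-based functions using the interior/boundary support disjointness, eliminate the $\Upsilon$ coefficients edge-by-edge through tangential projection (the paper's ``multiply by $\Upsilon_i$ and integrate over $e_i$'' is exactly your projection step, both resting on $\bt_{e_i}\cdot\bn_{e_i}=0$), and finish with Lemma \ref{lemma3}. Your write-up is in fact somewhat more careful than the paper's, which disposes of the $\Phi_i$ with only a one-line remark, but the argument is the same.
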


\begin{proof}
The number of the functions in the right hand side of (\ref{main}) is $6N_K+N_F+N_V$ which is equal to ${\rm dim}(D_h)$ due to (\ref{bbb}).
Next, we prove that
$$\Phi_1,\cdots, \Phi_{6N_K}, \Upsilon_1,\cdots, \Upsilon_{N_F},\Lambda_1,\cdots,\Lambda_{N_V}$$
are linearly independent.
Since $\Phi_i$ take zero value on all $f\in\F_h$, $\Phi_i$ will be linearly independent to all $\Upsilon_l$ and $\Lambda_m$.
Suppose
\begin{equation}\label{t1}
C_1\Upsilon_1+\cdots+C_{N_F}\Upsilon_{N_F}+C_{N_F+1}\Lambda_1+\cdots+C_{N_F+N_V}\Lambda_{N_V}=0.
\end{equation}
Multiplying (\ref{t1}) by $\Upsilon_i$ and integrating over $e_i$, we have
\[
C_i|e_i|=0,
\]
where we use the fact $\bt_{e_i}\cdot\bn_e=0$. Thus we can obtain $C_i=0$ for $i=1,\cdots,N_F$. By Lemma \ref{lemma3}, we can prove $C_i=0$ for $i=N_F+1,\cdots,N_F+N_V$. The proof of the lemma is completed.
\end{proof}

\section{Construction of Discrete Divergence Free Basis for  Three Dimensional Space}

Let $\T_h$ be a partition of $\Omega\subset \mathbb{R}^3$ consisting polyhedrons without hanging nodes.  Recall $N_F=card (\F_h^0)$, $N_V=card ({\cal V}_h^0)$ and $N_K=card (\T_h)$.  Denote by $\E_h$ all the edges in $\T_h$ and let $\E_h^0={\cal E}_h\backslash\partial\Omega$. Let $N_E=card (\E_h^0)$.

It is known based on the Euler formula that for a  partition consisting of convex polyhedrons, then
\begin{equation}\label{key3}
N_V+N_F+1=N_E+N_K.
\end{equation}

For each $T\in\T_h$ and any $\bv=\{\bv_0,\bv_b\}\in V_h$, $\bv_0$ is a vector function with three components and each component is a linear function. Therefore
there are twelve linearly independent linear functions $\Phi_{j+1}, \Phi_{j+2},\cdots,\Phi_{j+12}$ in $V_h$ such that they are nonzero only at the interior of element $T$. For each face $f_i\in\F_h^0$, $\bv_b$ is a constant vector function with three component. Thus there are three linearly independent constant vector functions $\Psi_{i,1}$, $\Psi_{i,2}$ and $\Psi_{i,3}$ in $V_h$ which take nonzero value only on the face $f_i$. Then it is easy to see that
\begin{equation}\label{basis3}
V_h={\rm span}\{\Phi_1,\cdots, \Phi_{12N_K},\Psi_{1,1},\Psi_{1,2},\Psi_{1,3}\cdots,\Psi_{N_F,1},\Psi_{N_F,2},\Psi_{N_F,3}\}.
\end{equation}

Since the dimension for pressure space $W_h$ is $N_K-1$, (\ref{key3}) implies
\begin{eqnarray}
\rm{dim} (D_h)&=&\rm{dim} (V_h)-\rm{dim}(W_h)\label{bbb3}\\
&=&12N_K+3N_F-N_K+1=12N_K+2N_F+N_E-N_V.\nonumber
\end{eqnarray}

\begin{lemma}
The  functions $\Phi_1,\cdots, \Phi_{12N_K}$  in (\ref{basis3}) are in $D_h$ and linearly independent.
\end{lemma}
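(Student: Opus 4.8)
The plan is to mirror the two-dimensional argument that established the analogous lemma there, namely that the interior basis functions lie in $D_h$ and are manifestly linearly independent. First I would write each interior function as $\Phi_i=\{\Phi_{i,0},\Phi_{i,b}\}$ and invoke the defining property of these functions, which is that they are nonzero only at the interior of a single element $T$; in particular their boundary part vanishes, so $\Phi_{i,b}=0$ on every face. This is the key structural fact that drives the whole computation, and it is identical in spirit to the 2D case.

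Next, to show $\Phi_i\in D_h$, I would fix an arbitrary $q\in W_h$ and compute $b(\Phi_i,q)=\sum_{T\in\T_h}(\nabla_w\cdot\Phi_i,q)_T$ using the definition (\ref{d-d}) of the weak divergence. Each local term expands as $-(\Phi_{i,0},\nabla q)_T+\langle \Phi_{i,b}\cdot\bn,q\rangle_{\partial T}$. Two facts collapse this to zero: since $q|_T\in P_0(T)$ is piecewise constant we have $\nabla q=0$ on each element, killing the volume term; and since $\Phi_{i,b}=0$ the boundary term vanishes as well. Hence $b(\Phi_i,q)=0$ for all $q\in W_h$, which is exactly the condition (\ref{D_h}) defining membership in $D_h$.

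Finally, the linear independence of $\Phi_1,\dots,\Phi_{12N_K}$ follows because these twelve-per-element functions are simply the standard basis of the interior spaces $[P_1(T)]^3$ across the $N_K$ elements, and functions supported on distinct elements (or distinct components of $[P_1(T)]^3$ on the same element) cannot produce nontrivial cancellation. I expect no genuine obstacle here: the argument is essentially verbatim the proof of the corresponding 2D lemma, with the only changes being the dimension count ($12N_K$ rather than $6N_K$) and the fact that $\bv_0$ now has three components. The essential mechanism, that piecewise-constant pressures have vanishing gradient and that interior functions carry no boundary trace, is dimension-independent.
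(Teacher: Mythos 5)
Your proof is correct and follows essentially the same route as the paper's: identify that $\Phi_{i,b}=0$, expand $b(\Phi_i,q)$ via the weak divergence definition, and use $\nabla q=0$ for piecewise constant $q$ to conclude both terms vanish. The only difference is that you spell out the linear independence (disjoint supports and standard local bases of $[P_1(T)]^3$), which the paper simply declares obvious.
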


\smallskip

\begin{proof}
Let $\Phi_i=\{\Phi_{i,0},\Phi_{i,b}\}$. The definition of $\Phi_i$ implies $\Phi_{i,b}=0$.
 For any $q\in W_h$, it follows from (\ref{d-d}), $\nabla q=0$ and $\Phi_{i,b}=0$ that for any $q\in W_h$,
\begin{eqnarray*}
b(\Phi_i,q)&=&\sum_{T\in\T_h} (\nabla_w\cdot\Phi_i,q)_T\\
&=&\sum_{T\in\T_h}(-(\Phi_{i,0},\nabla q)_T+\l \Phi_{i,b}\cdot\bn, q\r_\pT)\\
&=&0,
\end{eqnarray*}
which finished the proof of the lemma since the linear independence of  $\Phi_1,\cdots, \Phi_{12N_K}$ is obvious.
\end{proof}

\smallskip

For any face $f_i\in\F_h^0$, let $\bn$  be a unit normal vector of $f_i$ and let $\bt_{1}$ and $\bt_{2}$ be two linearly independent unit tangential vectors to the face $f_i$. Define  $\Upsilon_{i,1}=C_{1,1}\Psi_{i,1}+C_{1,2}\Psi_{i,2}+C_{1,3}\Psi_{i,3}$ and $\Upsilon_{i,2}=C_{2,1}\Psi_{i,1}+C_{2,2}\Psi_{i,2}+C_{2,3}\Psi_{i,3}$ such that $\Upsilon_{i,1}|_{f_i}=\bt_{1}$ and $\Upsilon_{i,1}|_{f_i}=\bt_{2}$ respectively. Obviously $\Upsilon_{i,1}$ and $\Upsilon_{i,2}$ are in $V_h$ and only nonzero on $f_i$.

\smallskip

\begin{lemma}
Functions  $\Upsilon_{1,1},\Upsilon_{1,2}\cdots,\Upsilon_{N_F,1}, \Upsilon_{N_F,2}\in V_h$ are in $D_h$ and linearly independent.
\end{lemma}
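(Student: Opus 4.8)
The plan is to mirror exactly the two-dimensional argument for the $\Upsilon_i$ functions, since the face tangential vectors in 3D play the same role as the edge tangential vector in 2D. I would establish two things: first that each $\Upsilon_{i,1}$ and $\Upsilon_{i,2}$ lies in the discrete divergence free subspace $D_h$, and second that the whole collection is linearly independent.

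For membership in $D_h$, I would fix an arbitrary $q\in W_h$ and compute $b(\Upsilon_{i,k},q)$ directly from the definition of the weak divergence in (\ref{d-d}). Writing $\Upsilon_{i,k}=\{\Upsilon_{i,k,0},\Upsilon_{i,k,b}\}$, the interior term $(\Upsilon_{i,k,0},\nabla q)_T$ vanishes because $q$ is piecewise constant, so $\nabla q=0$ on each $T$. This leaves only the boundary term $\langle \Upsilon_{i,k,b}\cdot\bn, q\rangle_{\partial T}$. The key point is that $\Upsilon_{i,k,b}$ is supported only on the single face $f_i$ and equals a purely tangential vector $\bt_k$ there, so on that face $\Upsilon_{i,k,b}\cdot\bn=\bt_k\cdot\bn=0$. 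Hence every surviving term is zero and $b(\Upsilon_{i,k},q)=0$, giving $\Upsilon_{i,k}\in D_h$.

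For linear independence I would use the disjoint-support structure of these functions across the faces. Each $\Upsilon_{i,1},\Upsilon_{i,2}$ is nonzero only on $f_i$, and on that face its boundary value is $\bt_1$ or $\bt_2$ respectively. Suppose a vanishing linear combination $\sum_{i,k} c_{i,k}\Upsilon_{i,k}=0$. Restricting to a fixed face $f_i$ kills all terms except those associated with $f_i$ itself, leaving $c_{i,1}\bt_1+c_{i,2}\bt_2=0$ on $f_i$. Since $\bt_1$ and $\bt_2$ were chosen to be linearly independent tangential vectors, this forces $c_{i,1}=c_{i,2}=0$. Repeating for every face yields that all coefficients vanish.

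The argument is essentially routine and I expect no serious obstacle; the only point that requires care is confirming that the $\Upsilon_{i,k}$ really are supported on a single face (so that restriction to $f_i$ isolates exactly the two relevant coefficients), which follows directly from their construction as combinations of the face-localized basis functions $\Psi_{i,1},\Psi_{i,2},\Psi_{i,3}$. I would note that this lemma alone does not span $D_h$: its dimension count in (\ref{bbb3}) involves $N_E-N_V$, indicating that a further family of basis functions (analogous to the vertex functions $\Lambda_i$ in 2D, but built around interior edges in 3D) is still needed to complete the basis.
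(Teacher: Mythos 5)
Your proposal is correct and follows essentially the same route as the paper: membership in $D_h$ via the weak divergence definition (\ref{d-d}) with $\nabla q=0$ killing the interior term and $\bt_k\cdot\bn=0$ killing the boundary term, and linear independence via the single-face support of each $\Upsilon_{i,k}$. Your treatment of independence is in fact slightly more careful than the paper's one-line assertion, since you explicitly use the linear independence of $\bt_1$ and $\bt_2$ to separate the two coefficients attached to the same face.
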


\smallskip

\begin{proof}
Let $\Upsilon_{i,j}=\{\Upsilon_{i,j,0},\Upsilon_{i,j,b}\}$ with $j=1,2$. For any $q\in W_h$, it follows from (\ref{d-d}) and $\nabla q=0$ for $j=1,2$,
\begin{eqnarray*}
b(\Upsilon_{i,j},q)&=&\sum_{T\in\T_h} (\nabla_w\cdot\Upsilon_{i,j},q)_T\\
&=&\sum_{T\in\T_h}(-(\Upsilon_{i,j,0},\nabla q)_T+\l \Upsilon_{i,j,b}\cdot\bn, q\r_\pT)\\
&=&\sum_{T\in\T_h}\l \Upsilon_{i,j,b}\cdot\bn, q\r_\pT\\
&=&0,
\end{eqnarray*}
where we use the fact $\bt_{j}\cdot\bn=0$ with $j=1,2$.
Since $\Upsilon_{i,j}$ is only nonzero on $f_i$, $\Upsilon_{1,1},\Upsilon_{1,2}\cdots,\Upsilon_{N_F,1}, \Upsilon_{N_F,2}$ are linearly independent. We completed the proof.
\end{proof}

\begin{figure}
\begin{center}
\includegraphics[width=6cm]{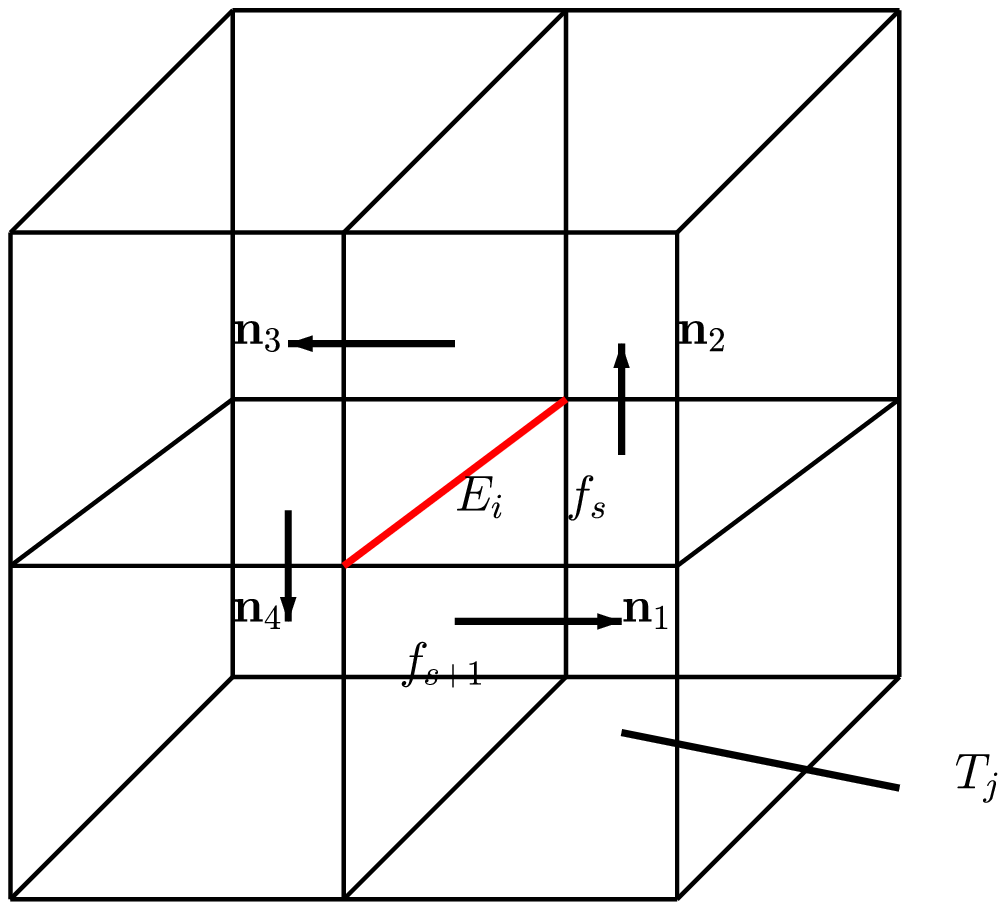}
\caption{A 3D Hull ${\cal S}_{E_i}$ .}  \label{fig2}
\end{center}
\end{figure}

\smallskip

For a given interior edge $E_i\in {\cal E}_h$,  assume there are $r$ elements having $E_i$ as one of their edges which form a solid denoted by ${\cal S}_{E_i}$ shown in Figure \ref{fig2}. Then there are $r$ interior faces $f_j$ ($j=1,\cdots, r$) in ${\cal S}_{E_i}$. Let $\bn_j$ be a unit normal vector on the face $f_j$ such that normal vectors $\bn_{j}$ $j=1,\cdots,r$ form oriented loop around the interior edge $E_i$ shown in Figure \ref{fig2}.  For each $f_j$, let $\Psi_{j,1}$, $\Psi_{j,2}$ and $\Psi_{j,3}$ be the three basis functions of $V_h$ which are only nonzero on $f_j$. Define $\Theta_j=C_1\Psi_{j,1}+C_2\Psi_{j,2}+C_3\Psi_{j,3}\in V_h$ such that $\Theta_j|_{f_j}=\bn_{j}$. Define $\Lambda_i=\sum_{j=1}^r \frac{1}{|f_j|}\Theta_j$.

\smallskip

\begin{lemma}
Functions $\Lambda_1,\cdots,\Lambda_{N_E}\in V_h$ are in $D_h$ .
\end{lemma}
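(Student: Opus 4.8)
The plan is to mimic the proof of the two-dimensional Lemma~\ref{lemma3} almost verbatim, the only genuinely new ingredient being the three-dimensional orientation bookkeeping around the edge $E_i$. Note that this lemma asserts only membership in $D_h$, so I would only need to verify $b(\Lambda_i,q)=0$ for all $q\in W_h$. Since $b(\Lambda_i,\cdot)$ is linear in its second argument and every $q\in W_h$ is a linear combination of the element indicators $q_j$ (the piecewise constant equal to $1$ on $T_j$ and $0$ elsewhere), it suffices to prove
\begin{equation*}
b(\Lambda_i,q_j)=0,\qquad j=1,\dots,N_K.
\end{equation*}
Recall that each $\Theta_j$ is assembled from the face basis functions $\Psi_{j,k}$, so its interior part vanishes; hence $\Lambda_{i,0}=0$, while $\Lambda_{i,b}$ is supported only on the interior faces $f_1,\dots,f_r$ of the solid $\mathcal S_{E_i}$.

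Next I would split on the location of $T_j$. If $T_j\notin\mathcal S_{E_i}$, then none of the faces of $T_j$ is one of the $f_s$ (each $f_s$ is an interior face shared by two elements of $\mathcal S_{E_i}$), so $\Lambda_{i,b}=0$ on $\partial T_j$ and $b(\Lambda_i,q_j)=0$ at once. If $T_j\in\mathcal S_{E_i}$, then in the fan of elements around $E_i$ the element $T_j$ lies between exactly two consecutive interior faces, say $f_s$ and $f_{s+1}$. Applying the definition (\ref{d-d}) of the weak divergence with $\varphi=q_j$, the volume term $-(\Lambda_{i,0},\nabla q_j)_{T_j}$ drops out (indeed $\Lambda_{i,0}=0$ and $\nabla q_j=0$), and since $q_j=1$ on $\partial T_j$ only the boundary integrals over $f_s$ and $f_{s+1}$ survive:
\begin{equation*}
b(\Lambda_i,q_j)=\int_{f_s}\frac{1}{|f_s|}\,\bn_s\cdot\bn\,ds+\int_{f_{s+1}}\frac{1}{|f_{s+1}|}\,\bn_{s+1}\cdot\bn\,ds,
\end{equation*}
where $\bn$ denotes the outward unit normal of $T_j$ on $\partial T_j$, exactly as in the two-dimensional computation.

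The crux of the argument is the sign evaluation on these two faces. Because each face $f_s$ is planar and both $\bn_s$ and $\bn$ are constant unit normals to it, $\bn_s\cdot\bn=\pm1$, so each integral equals $\pm1$ after the $1/|f_s|$ factor cancels the area. The hypothesis that the designated normals $\bn_1,\dots,\bn_r$ form an oriented loop around $E_i$ is precisely what forces the two contributions to carry opposite signs: relative to $T_j$, the loop enters the element through one of $f_s,f_{s+1}$ and exits through the other, so on one face $\bn_s$ agrees with the outward normal $\bn$ while on the other it opposes it. Hence the two integrals are $+1$ and $-1$ and cancel, giving $b(\Lambda_i,q_j)=0$. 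This is the one place where I expect to have to argue carefully, since it requires pinning down the relationship between the global loop orientation around $E_i$ and the per-element outward normal; once that sign rule is made precise, the vanishing of $b(\Lambda_i,q_j)$ for every $j$ shows $\Lambda_i\in D_h$ and completes the proof.
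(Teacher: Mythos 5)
Your proposal is correct and follows essentially the same route as the paper: reduce to the element indicators $q_j$, split on whether $T_j$ lies in $\mathcal{S}_{E_i}$, and show the two surviving face integrals over $f_s$ and $f_{s+1}$ cancel by the loop orientation of the normals $\bn_j$. In fact you are somewhat more explicit than the paper, which asserts the cancellation $\int_{f_s}\frac{1}{|f_s|}\bn_{f_s}\cdot\bn+\int_{f_{s+1}}\frac{1}{|f_{s+1}|}\bn_{f_{s+1}}\cdot\bn=0$ by appeal to its figure, whereas you spell out the sign rule ($\bn_s\cdot\bn=\pm1$ with opposite signs on the entering and exiting faces).
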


\smallskip

\begin{proof}
Let $\Lambda_i$ and $q_j$ be associated with hull ${\cal S}_{E_i}$ and element $T_j$ respectively.
If $T_j\in\T_h$ is not in ${\cal S}_{E_i}$, we easily have $b(\Lambda_i,q_j)=0$. If $T_j\in {\cal S}_{E_i}$, let faces $f_s$ and $f_{s+1}$ be its two faces in ${\cal S}_{E_i}$ shown in Figure \ref{fig2},
\begin{eqnarray*}
b(\Lambda_i,q_j)&=&\sum_{T\in\T_h} (\nabla_w\cdot\Lambda_i,q_j)_T\\
&=& (\nabla_w\cdot\Lambda_i,q_j)_{T_j}\\
&=&-(\Lambda_{i,0},\nabla q)_{T_j}+\l \Lambda_{i,b}\cdot\bn, q\r_{\pT_j}\\
&=&\int_{f_s}\frac{1}{|f_s|}\bn_{f_s}\cdot\bn+\int_{f_{s+1}}\frac{1}{|f_{s+1}|}\bn_{f_{s+1}}\cdot\bn\\
&=&0
\end{eqnarray*}
We proved $\Lambda_i\in D_h$ for $i=1,\cdots, N_E$.
\end{proof}

\smallskip

Unfortunately, $\Lambda_1,\cdots,\Lambda_{N_E}$ are linearly dependent. Let $P_i$ be an interior vertex in $\T_h$ and ${\cal G}_{P_i}$ be a hull formed by the elements $T\in\T_h$ sharing $P_i$. Let $e_j\in\E_h^0,\;j=1,\cdots,t$ with $P_i$ as one of its end point and $\Lambda_j,\; j=1,\cdots, t$ be the discrete divergence free functions associated with $e_j$. With appropriate choosing $\bn_j$ in defining $\Lambda_j$, one can prove that $\sum_{j=1}^t\Lambda_j=0$. However, if we eliminate one function from $\{\Lambda_1,\cdots,\Lambda_t\}$ randomly, say $\Lambda_1$, we will prove that $\{\Lambda_2,\cdots,\Lambda_t\}$ are linearly independent in the following lemma.

\begin{lemma}\label{lemma9}
Functions $\Lambda_2,\cdots,\Lambda_t$ are linearly independent.
\end{lemma}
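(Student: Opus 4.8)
The statement asserts that, after discarding one function $\Lambda_1$ from the collection $\{\Lambda_1,\ldots,\Lambda_t\}$ of discrete divergence-free functions associated with the interior edges $e_1,\ldots,e_t$ emanating from a fixed interior vertex $P_i$, the remaining functions $\{\Lambda_2,\ldots,\Lambda_t\}$ are linearly independent. Here the only relation among the full collection is $\sum_{j=1}^t \Lambda_j = 0$, so morally the claim says that this is the \emph{unique} (up to scaling) linear dependence among them. My plan is to suppose a vanishing linear combination $\sum_{j=2}^t c_j \Lambda_j = 0$ and show all $c_j = 0$ by testing against suitable objects that isolate individual coefficients, much as was done in the proof of Lemma~\ref{lemma3} in the two-dimensional case.

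**Main strategy.** First I would recall the structure of each $\Lambda_j$: it is built from the face-bubble functions $\Theta_f = C_1\Psi_{f,1}+C_2\Psi_{f,2}+C_3\Psi_{f,3}$ whose boundary component equals the unit normal $\bn_f$ on the faces $f$ surrounding $E_j$, weighted by $1/|f|$. The key geometric observation is that each \emph{interior face} $f$ in the hull ${\cal G}_{P_i}$ around $P_i$ is shared by exactly those edges $e_j$ that lie on $f$; a generic interior face touching $P_i$ contains precisely two of the edges $e_1,\ldots,e_t$ in its boundary, so its normal appears in exactly two of the $\Lambda_j$'s. This is what makes the global sum telescope to zero and is also the lever I will use. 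The plan is to integrate the relation $\sum_{j=2}^t c_j \Lambda_j = 0$ against the restriction to a carefully chosen face, reducing to a two-term (or one-term) equation in the $c_j$'s, and then propagate the vanishing around the ``fan'' of edges at $P_i$.

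**Propagation via a boundary-adjacent edge.** The cleanest approach mirrors the linear-independence argument of Lemma~\ref{lemma3}: I would look for an interior face $f_m$ in ${\cal G}_{P_i}$ that is ``exposed,'' in the sense that it belongs to the discarded edge $e_1$ together with exactly one surviving edge, or more usefully a face carrying the normal of only a single surviving $\Lambda_j$ among $\{\Lambda_2,\ldots,\Lambda_t\}$. Testing the relation on such a face by computing $\int_{f_m}\left(\sum_{j=2}^t c_j \Lambda_j\right)\cdot\bn_{f_m}$ forces one coefficient $c_l$ to vanish, since only $\Lambda_l$ is nonzero there with a nonzero normal contribution $\tfrac{1}{|f_m|}\bn_{f_m}\cdot\bn_{f_m}\neq 0$. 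Having removed $\Lambda_l$, the same argument applies inductively to the reduced fan: each step peels off one surviving edge whose associated face now carries only one remaining coefficient. Because $\Lambda_1$ has been deleted, the single global relation $\sum_{j=1}^t\Lambda_j=0$ cannot be reconstructed from $\{\Lambda_2,\ldots,\Lambda_t\}$ alone, so the induction terminates with all $c_j=0$ rather than stalling on a nontrivial cycle.

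**Anticipated obstacle.** The delicate point is \emph{ordering} the peeling process and guaranteeing at each stage that a face exists carrying only one as-yet-undetermined coefficient. The faces and edges around $P_i$ form a combinatorial cell complex (the link of $P_i$), and the normals $\bn_f$ on interior faces are what couple adjacent edges; the deleted $\Lambda_1$ breaks the cyclic symmetry of this complex and provides the ``free boundary'' needed to start the induction. I expect the main work to be verifying that removing $\Lambda_1$ genuinely opens the fan---i.e.\ that the incidence structure of edges and faces at $P_i$, once $e_1$'s contribution is dropped, admits a linear ordering in which each successive edge is determined by a face not shared with any later edge. Once this combinatorial ordering is established, each coefficient vanishing is a one-line computation using $\bt\cdot\bn$-type orthogonality exactly as in the earlier lemmas, so the geometry of the link---not the algebra---is where the real content lies.
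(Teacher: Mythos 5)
Your proposal is correct and follows essentially the same route as the paper: the paper also picks an interior face of ${\cal G}_{P_i}$ shared by the discarded edge $e_1$ and one surviving edge, integrates the relation over that face to isolate and kill that single coefficient (since only two $\Lambda_j$'s are nonzero on any such face), and then peels around the fan to conclude all $c_j=0$. Your discussion of the combinatorial ordering of the peeling is simply an explicit account of the step the paper compresses into ``by this way, we can prove that all $c_i=0$.''
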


\smallskip

\begin{proof}
Let $f\in\F_h^0$ be an interior face in ${\cal G}_{P_i}$ with $e_1$ and $e_2$ as its two edges in ${\cal G}_{P_i}$. The definition of $\Lambda_j$ implies that only $\Lambda_1$ and $\Lambda_2$ are nonzero on $f$. Suppose that there exist constants  $c_2,\cdots, c_{t}$ such that $\sum_{i=2}^{t}c_i\Lambda_i=0$.  Then we have
\[
0=\sum_{i=2}^{t}\int_{f}c_i\Lambda_i=\int_{f}c_2\Lambda_2=\int_{f}c_2\Theta_f=c_2\bn_{f},
\]
which implies $c_2=0$. By this way, we can prove that all $c_i=0$ and $\Lambda_2,\cdots,\Lambda_{t}$ are linearly independent.
\end{proof}

We start with $\{\Lambda_1,\cdots,\Lambda_{N_E}\}$ and eliminate one function for each ${\cal G}_{P_i}$ for $i=1,\cdots, N_V$. With renumbering the functions, we end up with $N_E-N_V$ discrete divergence free functions: $\{\Lambda_1,\cdots,\Lambda_{N_E-N_V}\}$.

\smallskip

\begin{lemma}\label{lemma1}
Functions $\{\Lambda_1,\cdots,\Lambda_{N_E-N_V}\}$ are linearly independent.
\end{lemma}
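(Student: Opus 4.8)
The plan is to prove that the $N_E - N_V$ functions $\{\Lambda_1,\cdots,\Lambda_{N_E-N_V}\}$, which result from deleting one $\Lambda$-function from each of the $N_V$ vertex-hulls ${\cal G}_{P_i}$, form a linearly independent set, thereby completing the construction of the divergence free basis in three dimensions. The global relations among the full collection $\{\Lambda_1,\cdots,\Lambda_{N_E}\}$ are exactly the $N_V$ vertex relations $\sum_{e_j\ni P_i}\Lambda_j = 0$, one for each interior vertex $P_i$; the elimination step removes precisely one summand from each such relation. My goal is therefore to show that the only remaining linear dependence is the trivial one.

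First I would suppose $\sum_{i=1}^{N_E-N_V} c_i\Lambda_i = 0$ and aim to propagate the conclusion $c_i = 0$ across the mesh in a way analogous to Lemma \ref{lemma9}, but now globally rather than within a single hull. The key local fact, already exploited in Lemma \ref{lemma9}, is that an interior face $f\in\F_h^0$ is shared by exactly two interior edges within any single vertex-hull containing it, so that on a given face only a small controllable number of the $\Lambda_i$ are nonzero. The plan is to identify an edge $E_i$ (i.e. a surviving $\Lambda_i$) together with a face $f$ on which $\Lambda_i$ is the only surviving nonzero function among the collection: integrating the relation against $\bn_f$ over $f$ then yields, via $\int_f \Theta_f = \bn_f$ as in the earlier lemma, the conclusion $c_i\bn_f = 0$, hence $c_i = 0$. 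One then removes that edge and repeats, peeling off coefficients one at a time.

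The main obstacle I anticipate is organizing this peeling argument so that at every stage such a ``free'' face genuinely exists. The difficulty is that deleting one function per hull does not, on its face, guarantee that the surviving edges can be linearly ordered so that each has a private face exposed relative to its predecessors. I would address this by exploiting the boundary structure and the counting identity (\ref{key3}): a cleaner route is to argue by dimension. Since the full set $\{\Lambda_1,\cdots,\Lambda_{N_E}\}$ spans a subspace whose only dependencies are the $N_V$ vertex relations, and since those relations are themselves linearly independent (each vertex relation involves an edge touching $P_i$ that no other vertex relation can involve alone, by the same one-face-two-edges locality), the span of all $\Lambda$'s has dimension exactly $N_E - N_V$. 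Deleting one function from each independent relation leaves a set that still spans this same space and has cardinality $N_E - N_V$ equal to that dimension, so it must be a basis and in particular linearly independent.

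Consequently, the cleanest organization of the proof is: (i) verify that the $N_V$ vertex relations are the complete set of relations and are themselves independent, so $\dim\,{\rm span}\{\Lambda_1,\cdots,\Lambda_{N_E}\} = N_E - N_V$; (ii) observe that removing one summand from each relation cannot reduce the span; (iii) conclude by equality of cardinality and dimension. I expect step (i), establishing that there are no further hidden relations beyond the vertex ones, to be the genuinely delicate point, and I would prove it by the face-localization argument above, using that each interior face meets exactly two interior edges inside a common hull and that every edge eventually reaches a face adjacent to the domain boundary where the unpeeling can be initiated.
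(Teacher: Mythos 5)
Your reduction to dimension counting breaks at step (ii): it is simply not true that deleting one summand from each of the $N_V$ independent vertex relations ``cannot reduce the span,'' and this is exactly where the whole difficulty of the lemma lives. Whether the surviving set is independent (equivalently, still spans) depends on \emph{which} function is deleted from each hull, not on how many are deleted. Concretely, suppose three interior vertices $P_1,P_2,P_3$ are pairwise joined by interior edges $e_{12},e_{23},e_{31}$ (this already occurs in a tetrahedral mesh of a ball), and eliminate $\Lambda_{e_{12}}$ for ${\cal G}_{P_1}$, $\Lambda_{e_{23}}$ for ${\cal G}_{P_2}$, $\Lambda_{e_{31}}$ for ${\cal G}_{P_3}$ --- a perfectly legal ``one function per hull'' elimination. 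Fix a global orientation for each edge loop and write the relation at $P_i$ as $R_i=\sum_e \epsilon_{ie}\Lambda_e=0$ with $\epsilon_{ie}=\pm 1$; since the appropriate orientation of an edge reverses between its two endpoints, $\epsilon$ behaves like an oriented incidence matrix, so in the sum $R_1+R_2+R_3$ the coefficients of the three deleted edges cancel, while the coefficients of the remaining edges at $P_1,P_2,P_3$ (e.g.\ edges running to $\partial\Omega$) survive with coefficient $\pm 1$. Thus $R_1+R_2+R_3$ is a nontrivial linear dependence supported entirely on the surviving functions: the survivors are dependent, and their span has dimension strictly less than $N_E-N_V$. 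So steps (ii)--(iii) are unrepairable as stated; equality of cardinality and dimension cannot substitute for an argument that sees the actual elimination pattern.

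What the counterexample shows is that the lemma holds only for suitable eliminations: the deleted edges (one per interior vertex, each incident to its vertex) must contain no cycle once all boundary vertices are identified to a single point, i.e.\ they must form a forest each of whose components reaches $\partial\Omega$. Such a choice exists --- for each interior vertex delete the first edge of a path leading to the boundary --- and with it your \emph{first} strategy, the peeling argument, does go through and is in fact what the paper's own (one-line) proof gestures at by invoking Lemmas \ref{lemma3} and \ref{lemma9}: order the interior vertices by distance to $\partial\Omega$ and work inward, at each stage producing a face on which exactly one surviving function with still-undetermined coefficient is nonzero, exactly as in the 2D Lemma \ref{lemma3}. Note also that your step (i) --- that the vertex relations exhaust \emph{all} relations among $\Lambda_1,\dots,\Lambda_{N_E}$ --- is a discrete Poincar\'e-type statement requiring simple connectivity of $\Omega$, and it is not actually needed: linear independence of the survivors only requires showing that no nonzero relation is supported on them, which the peeling argument delivers directly without classifying the full relation space.
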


\begin{proof}
The proof of the lemma is similar to the proofs of Lemma \ref{lemma3} and Lemma \ref{lemma9}.
\end{proof}

\smallskip

\begin{theorem}
Let $D_h$ be defined in (\ref{D_h}). Then for  three dimensional space, $D_h$ is spanned by the following basis functions,
\begin{equation}\label{main3}
D_h={\rm Span}\{\Phi_1,\cdots, \Phi_{12N_K}, \Upsilon_{1,1},\Upsilon_{1,2},\cdots, \Upsilon_{N_F,1},\Upsilon_{N_F,2},\Lambda_1,\cdots,\Lambda_{N_E-N_V}\}.
\end{equation}
\end{theorem}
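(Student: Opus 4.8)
The plan is to follow the same three-step argument that proves Theorem \ref{thm1} in the two-dimensional case. First I would verify that the number of functions listed in (\ref{main3}) equals ${\rm dim}(D_h)$; second, I would observe that every listed function already belongs to $D_h$ by the preceding lemmas; and third, I would establish that the listed functions are linearly independent. Since the count matches the dimension and all functions lie in $D_h$, proving linear independence is enough to conclude that they form a basis and therefore span $D_h$.

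For the count, the family in (\ref{main3}) consists of the $12N_K$ functions $\Phi_i$, the $2N_F$ functions $\Upsilon_{i,j}$, and the $N_E-N_V$ functions $\Lambda_k$, for a total of $12N_K+2N_F+N_E-N_V$. By (\ref{bbb3}) this number is exactly ${\rm dim}(D_h)$, so nothing further is required here.

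The heart of the proof is linear independence. Because each $\Phi_i$ vanishes on every face $f\in\F_h$ while the $\Upsilon_{i,j}$ and $\Lambda_k$ are supported on faces, the interior degrees of freedom decouple from the face degrees of freedom, so the $\Phi_i$ may be handled separately and it remains to show that a relation $\sum_{i,j}C_{i,j}\Upsilon_{i,j}+\sum_k D_k\Lambda_k=0$ forces all coefficients to vanish. I would restrict this relation to a fixed interior face $f_i\in\F_h^0$: on $f_i$ only $\Upsilon_{i,1}$ and $\Upsilon_{i,2}$ contribute, with boundary values equal to the tangential vectors $\bt_1,\bt_2$, together with those $\Lambda_k$ whose defining edge is an edge of $f_i$, whose boundary values on $f_i$ are multiples of the normal $\bn$. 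Taking the tangential component on $f_i$, or equivalently integrating against $\bt_1$ and $\bt_2$, annihilates every $\Lambda_k$ term since $\bt_j\cdot\bn=0$; the linear independence of $\bt_1,\bt_2$ then gives $C_{i,1}=C_{i,2}=0$. Repeating this for every face eliminates all $\Upsilon$-coefficients and leaves $\sum_k D_k\Lambda_k=0$, whence Lemma \ref{lemma1} forces $D_k=0$.

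The main obstacle is the clean normal–tangential decoupling on each face, which rests on the fact that every $\Lambda_k$ restricts to a purely normal field on any face of its solid ${\cal S}_{E_k}$ while each $\Upsilon_{i,j}$ restricts to a purely tangential field; this orthogonality is precisely what permits the two families to be separated face by face. The genuinely delicate point, namely that the reduced collection $\{\Lambda_1,\dots,\Lambda_{N_E-N_V}\}$ remains linearly independent despite the vertex relations $\sum_{j}\Lambda_j=0$, has already been isolated in Lemma \ref{lemma1}, so once the $\Upsilon$-coefficients are removed the argument closes at once.
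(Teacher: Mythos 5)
Your proposal is correct and follows essentially the same route as the paper: the paper's own proof consists of the dimension count via (\ref{bbb3}) together with the remark that linear independence follows ``similar to the proof of Theorem \ref{thm1}'', and your face-by-face tangential/normal decoupling argument (killing the $\Lambda_k$ terms via $\bt_j\cdot\bn=0$, then invoking Lemma \ref{lemma1}) is exactly the three-dimensional analog of that 2D proof. In fact, you have supplied the details the paper leaves implicit, so nothing is missing.
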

\begin{proof}
The number of the functions in the right hand side of (\ref{main3}) is $12N_K+2N_F+N_E-N_V$ which is equal to ${\rm dim}(D_h)$ due to (\ref{bbb3}).
Similar to the proof of Theorem \ref{thm1}, we can prove that  $\{\Phi_1,\cdots, \Phi_{6N_K}, \Upsilon_{1,1},\Upsilon_{1,2},\cdots, \Upsilon_{N_F,1},\Upsilon_{N_F,2},\Lambda_1,\cdots,\Lambda_{N_E-N_V}\}$ are linear independent.
 \end{proof}

\section{Numerical Experiments}
In this section, we shall report several results of numerical examples for  two dimensional Stokes equations. The divergence-free finite element scheme introduced in Algorithm \ref{algorithm2} is used. The main purpose if to numerically validate the accuracy and efficiency of the WG scheme.

Let $\bv_h\in D_h$ and $q_h\in W_h$, the error for the WG-FEM solution is measured in three norms defined
as follows:
\begin{eqnarray*}
\3bar \bv_h \3bar^2:&=&\sum_{T\in\mathcal{T}_h}\bigg(\int_T|\nabla_w \bv_h|^2dT+
+h_T^{-1}\int_{\partial T}(\bv_0-\bv_b)^2ds\bigg),\qquad(\mbox{A discrete $H^1$-norm}),\notag\\
\|\bv_0\|^2:&=&\sum_{T\in\mathcal{T}_h}\int_T|\bv_0|^2dx,\qquad\qquad\qquad\qquad  (\mbox{Element-based $L^2$-norm}).
\end{eqnarray*}

\subsection{Test case 1}
\begin{figure}[!htb]
\centering
\begin{tabular}{c}
  \resizebox{2.2in}{2.2in}{\includegraphics{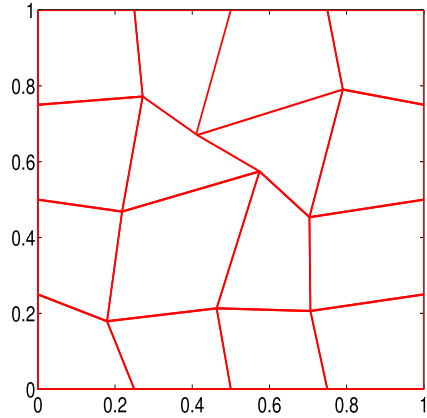}}
\end{tabular}
\caption{Example 1: Level 1 of mixed polygonal mesh.}\label{fig:ex1_1}
\end{figure}

\begin{table}[h]
\caption{Test Case 1: Numerical error and convergence rates for the
Stokes equation with homogeneous boundary
conditions on the uniform rectangular meshes.}\label{ex1_rect} \center
\begin{tabular}{|c|cc|cc|}
\hline
$h$ & $\3bar \bu_h-Q_h \bu\3bar$&order & $\|\bu_0-Q_0 \bu\|$ &order \\
\hline\hline
   1/4     &8.1050e-01 &            &2.9957e-01  &        \\ \hline
   1/8     &6.9698e-01 &2.1769e-01  &9.9634e-02  &1.5882  \\ \hline
   1/16    &4.4578e-01 &6.4479e-01  &3.1031e-02  &1.6829  \\ \hline
   1/32    &2.4452e-01 &8.6638e-01  &8.5507e-03  &1.8596  \\ \hline
   1/64    &1.2620e-01 &9.5424e-01  &2.2131e-03  &1.9500  \\ \hline
   1/128   &6.3751e-02 &9.8519e-01  &5.5968e-04  &1.9834  \\ \hline
\end{tabular}
\end{table}

\begin{table}[h]
\caption{Test Case 1: Numerical error and convergence rates for the
Stokes equation with homogeneous boundary
conditions on the mixed polygonal meshes.}\label{ex1_derect} \center
\begin{tabular}{|c|cc|cc|}
\hline
$h$ & $\3bar \bu_h-Q_h \bu\3bar$&order & $\|\bu_0-Q_0 \bu\|$ &order \\
\hline\hline
   4.1016e-01   &8.1917e-01 &            &3.0927e-01 &      \\ \hline
   2.0508e-01   &7.0386e-01 &2.1887e-01  &1.0421e-01 &1.5694\\ \hline
   1.0254e-01   &4.6002e-01 &6.1359e-01  &3.3478e-02 &1.6382\\ \hline
   5.1270e-02   &2.5560e-01 &8.4781e-01  &9.4392e-03 &1.8265\\ \hline
   2.5635e-02   &1.3230e-01 &9.5007e-01  &2.4560e-03 &1.9424\\ \hline
   1.2818e-02   &6.6890e-02 &9.8401e-01  &6.2217e-04 &1.9810\\ \hline
\end{tabular}
\end{table}

The domain is set as $\Omega=(0,1)\times(0,1)$. Let the exact solution $\bu$ and $p$ as follows,
\begin{eqnarray*}
\bu=\begin{pmatrix}
10x^2y(x-1)^2(2y-1)(y-1)\\
-10xy^2(2x-1)(x-1)(y-1)^2
\end{pmatrix}
\mbox{ and }p=10(2x-1)(2y-1).
\end{eqnarray*}
It is easy to check that homogeneous Dirichlet boundary condition is satisfied for this testing. The right hand side function $f$ is given to match the exact solutions.

The first test shall be performed on the uniform rectangular meshes and the mixed polygonal meshes. The uniform rectangular meshes are generated by partition the domain $\Omega$ into $n\times n$ sub-rectangles. The mesh size is denoted by $h=1/n.$ Moreover, the WG divergence free algorithm is also test on the mixed polygonal type meshes. We start with the initial mesh shown as the Figure \ref{fig:ex1_1}, which contains the mixture of triangles and quadrilaterals. The next level of mesh is to refine the previous level of mesh by connecting the mid-point on each edge. The mesh size in this case is also denoted by $h$.

The error profile is reported in Table \ref{ex1_rect}-\ref{ex1_derect} for the rectangular meshes and mixed polygonal meshes, respectively. Both of the tables show the same convergence rate as the theoretical conclusion, which is $O(h)$ in the $H^1-$norm and $O(h^2)$ in the $L^2-$norm.

\subsection{Test case 2} The domain is given by $\Omega=(0,1)\times(0,1)$. Let the exact solutions $\bu$ and $p$ as follows,
\begin{eqnarray*}
\bu=\begin{pmatrix}
x(1-x)(1-2y)\\
-y(1-y)(1-2x)
\end{pmatrix},
\mbox{ and }p=2(y-x).
\end{eqnarray*}
The Dirichlet boundary condition and the right hand side function is set to match the above exact solutions. It is easy to check that the exact solution $\bu$ satisfies the non-homogeneous boundary condition.

For this testing, the WG divergence free algorithm is perform on the triangular grids. The uniform triangular girds are generated by: (1) partition the domain into
$n\times n$ sub-rectangles; (2) divide each square element into two
triangles by the diagonal line with a negative slope. The mesh size
is denoted by $h=1/n.$

For the calculation of the pressure $p_h$, we shall make use of the basis function $\bv\in V_h\backslash D_h$. This basis function is corresponding to the velocity $\bv_b$ related of the normal direction on each edge. Let $\bv\in V_h\backslash D_h$, the pressure $p_h$ is computed as follows,
$$b(\bv,p_h)=a(\bu_h,\bv)-(f,\bv_0).$$

Beside testing two norms of the error in velocity, we also measure the $L^2-$error in pressure. The numerical results in Table \ref{ex2_tri} show an $O(h)$ convergence in the $\3bar\cdot\3bar$ norm for velocity, $O(h^2)$ convergence in the $L^2$-norm for velocity, and $O(h)$ convergence in the $L^2-$norm for pressure, which are confirmed by Theorem \ref{h1-bd}.

\begin{table}[h]
\caption{Test Case 2: Numerical error and convergence rates for the
Stokes equation with non-homogeneous boundary
conditions.}\label{ex2_tri} \center
\begin{tabular}{|c|ccc|}
\hline
$h$ & $\3bar \bu_h-Q_h \bu\3bar$ & $\|\bu_0-Q_0 \bu\|$ &$\|p_h-p\|$ \\
\hline\hline
   2.5000e-01   &2.8901e-01   &4.2990e-02   &2.2624e-01\\ \hline
   1.2500e-01   &1.4367e-01   &1.0896e-02   &1.2246e-01\\ \hline
   6.2500e-02   &7.1997e-02   &2.7432e-03   &6.4525e-02\\ \hline
   3.1250e-02   &3.6052e-02   &6.8773e-04   &3.3224e-02\\ \hline
   1.5625e-02   &1.8037e-02   &1.7210e-04   &1.6871e-02\\ \hline
   7.8125e-03   &9.0202e-03   &4.3038e-05   &8.5037e-03\\ \hline
Conv.Rate       &9.9966e-01   &1.9934       &9.4871e-01\\ \hline
\end{tabular}
\end{table}

\section*{Acknowledgement}
We offer our gratitude to professor Eric Lord and professor David
Singer for their help on obtaining Equation (\ref{key3}).

\vfill\eject

\end{document}